\newtheorem{theorem}{Theorem}[section]
\newtheorem*{theorem*}{Theorem}
\newtheorem{lemma}{Lemma}[section]
\theoremstyle{remark}
\newcommand{\CC}{\mathbb{C}}
\newcommand{\NN}{\mathbb{N}}
\newcommand{\ind}{\mathbbm{1}}
\begin{document}
\title{Multiplicative functions supported on the $k$-free integers with small partial sums}
\author{Marco Aymone, Caio Bueno, Kevin Medeiros}
\begin{abstract}
We provide examples of multiplicative functions $f$ supported on the $k$-free integers such that at primes $f(p)=\pm 1$ and such that the partial sums of $f$ up to $x$ are $o(x^{1/k})$. Further, if we assume the Generalized Riemann Hypothesis, then we can improve the exponent $1/k$: There are examples such that the partial sums up to $x$ are $o(x^{1/(k+\frac{1}{2})+\epsilon})$, for all $\epsilon>0$. This generalizes to the $k$-free integers the results of  Aymone, `` {\em A note on
  multiplicative functions resembling the {M}\"{o}bius function}'', J. Number
  Theory, 212 (2020), pp.~113--121. 
\end{abstract}
\maketitle

\section{Introduction.}

We say that an integer $n$ is \textit{$k$-free} if $n$ is not divisible by any prime power $p^k$, where $k\geq 2$ is an integer; we say that $f:\NN\to\CC$ is multiplicative if $f(nm)=f(n)f(m)$ for all positive integers $n$ and $m$ such that $\gcd(n,m)=1$, and we say that $f$ is completely multiplicative if this relation holds for all $n$ and $m$. Here the Vinogradov notation $f(x)\ll g(x)$ means that for some constant $C>0$, $|f(x)|\leq C|g(x)|$, for all sufficiently large $x>0$. 

In this paper we are interested in the partial sums $\sum_{n\leq x} f(n)$ where $f:\NN\to\{-1,0,1\}$ is a multiplicative function such that at primes $f(p)=\pm1$ and is supported on the $k$-free integers, \textit{i.e.}, functions $f$ such that $f(n)=0$ if $n$ is not $k$-free. 

When we consider the case of the squarefree integers (the case $k=2$), if $f$ is multiplicative, supported on the squarefree integers and $f(p)=-1$ for all primes $p$, then $f=\mu$, the M\"obius function. A well known fact is that the Riemann hypothesis (RH) is equivalent to the square-root cancellation of the partial sums of the M\"obius function $\mu$, more precisely, $\sum_{n\leq x}\mu(n)\ll x^{1/2+\epsilon}$, for all $\epsilon>0$. Up to this date, the true magnitude of the partial sums of the M\"obius function is still unknown, even conditionally on RH; the unconditional upper bound obtained by the classical zero free region for the Riemann zeta function is $\sum_{n\leq x}\mu(n)\ll x\exp(-c\sqrt{\log x})$, and the best conditional upper bound is due to Soundararajan \cite{soundararajan_mobius} which states that under RH, $\sum_{n\leq x}\mu(n)\ll \sqrt{x}\exp(\sqrt{\log x}(\log\log x)^{14})$. 
 
 These results for the M\"obius function naturally raises the question of how small the partial sums of a multiplicative function supported on the squarefree integers can be. This led Wintner \cite{wintner} to investigate a random model for the M\"obius function which consists in randomizing the values $f(p)=\pm1$ at primes $p$, and thus ending up with a random multiplicative function. Wintner proved that a random multiplicative function has partial sums $\sum_{n\leq x}f(n)\ll x^{1/2+\epsilon}$, for all $\epsilon>0$, almost surely, and also that these partial sums are not $\ll x^{1/2-\epsilon}$, almost surely. Wintner's seminal paper led to many investigations on the partial sums of a random multiplicative function. Here we refer for the interested reader: \cite{basquinn}, \cite{erdosuns}, \cite{halasz}, \cite{harpergaussian}, \cite{harpermult} and \cite{tenenbaum2013}. The best upper bound up to date is due to Basquin \cite{basquinn} and to Lau-Tenenbaum-Wu \cite{tenenbaum2013}: $\sum_{n\leq x}f(n) \ll \sqrt{x}(\log\log x)^{2+\epsilon}$, for all $\epsilon>0$, almost surely; the best $\Omega$ bound is due to a very recent result of Harper \cite{harperomegabound}: The partial sums $\sum_{n\leq x}f(n)$ are not $\ll \sqrt{x}(\log\log x)^{1/4-\epsilon}$, for all $\epsilon>0$, almost surely.
 
By these results for a random multiplicative function we can naturally ask if there are examples of multiplicative functions $f$ supported on the squarefree integers such that at primes $f(p)=\pm 1$, and such that its partial sums $\sum_{n\leq x}f(n)$ have cancellation beyond the $\sqrt{x}$-cancellation. In \cite{aymone_beyond_onehalf}, the first author exhibited examples of such functions such that $\sum_{n\leq x}f(n)\ll \sqrt{x}\exp(-c(\log x)^{1/4})$, and further, by assuming RH we can go much more beyond $\sqrt{x}$-cancellation: There are examples such that $\sum_{n\leq x}f(n)\ll x^{2/5+\epsilon}$, for all $\epsilon>0$. Here we ask the same question over the $k$-free integers: 

\noindent{\textbf{Question.}} \textit{For which values of exponents $\alpha>0$ there exists a multiplicative function $f$ that is supported on the $k$-free integers, at primes $f(p)=\pm1$ and such that $\sum_{n\leq x}f(n)\ll x^\alpha$?} 

The main reason to assume that $f(p)=\pm1$ over primes $p$, is that this guarantees that $\sum_{n\leq x}|f(n)|$ is not $o(x)$. With this condition, the event in which $f$ has small partial sums means a non-trivial cancellation among the values $f(n)$, as a consequence of the choice of the values $f(p^m)$, for primes $p$ and powers $m\geq 1$.

Our first result states:

\begin{theorem}\label{teorema 1} There exists a multiplicative function $f:\NN\to\{-1,0,1\}$ supported on the $k$-free integers such that at primes $p$, $f(p)=\pm 1$, and such that 
\begin{equation*}
\sum_{n\leq x}f(n)\ll\frac{ x^{1/k}}{\exp(c(\log x)^{1/4})},
\end{equation*}
for some constant $c>0$.
\end{theorem}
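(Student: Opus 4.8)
The plan is to realise $f$ as a quadratic-character twist restricted to the $k$-free integers, so that its Dirichlet series factors through $\zeta(ks)$ (or an $L$-function) and the size of its partial sums can be read off from the classical zero-free region by a contour argument. First I would fix a real non-principal (hence quadratic) Dirichlet character $\chi$ modulo $q$, let $g$ be the completely multiplicative function with $g(p)=\chi(p)$ for $p\nmid q$ and $g(p)=1$ for the finitely many $p\mid q$, and set $f=g\cdot\ind_{k\text{-free}}$, that is $f(p^m)=g(p)^m$ for $1\le m\le k-1$ and $f(p^m)=0$ for $m\ge k$. This $f$ is multiplicative, takes values in $\{-1,0,1\}$, satisfies $f(p)=\pm1$ at every prime, and has $|f|=\ind_{k\text{-free}}$, so $\sum_{n\le x}|f(n)|\asymp x$ as required by the discussion above.

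Next I would compute $F(s)=\sum_n f(n)n^{-s}$. Writing $\ind_{k\text{-free}}(n)=\sum_{d^k\mid n}\mu(d)$ and using complete multiplicativity of $g$ gives $F(s)=G(s)\prod_p\bigl(1-g(p)^k p^{-ks}\bigr)$, where $G(s)=\sum_n g(n)n^{-s}=L(s,\chi)\prod_{p\mid q}(1-p^{-s})^{-1}$. Since $g(p)=\pm1$, the correction factor $\prod_p(1-g(p)^kp^{-ks})$ equals $1/\zeta(ks)$ when $k$ is even and $1/G(ks)$ when $k$ is odd, so
\[
F(s)=\frac{G(s)}{\zeta(ks)}\quad(k\text{ even}),\qquad F(s)=\frac{G(s)}{G(ks)}\quad(k\text{ odd}).
\]
In either case $G$ is, up to Euler factors that are harmless for $\Re(s)>0$, the entire function $L(s,\chi)$. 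The nonvanishing of $\zeta$ and $L(\cdot,\chi)$ on $\Re(s)=1$, together with the classical zero-free region $\sigma>1-c_0/\log(|t|+2)$, then show that $F$ continues analytically to the region $\Re(s)>1/k-c_0/\bigl(k\log(|t|+2)\bigr)$, is analytic on the whole line $\Re(s)=1/k$ (the pole of $\zeta(ks)$ at $s=1/k$ only forces a zero of $F$, and for odd $k$ the denominator $G(ks)$ is finite and nonzero there), and grows at most polynomially in $t$ by convexity bounds for $L$ and the bound $1/\zeta\ll\log$ in the zero-free region.

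Finally I would extract the estimate by a Mellin--Perron contour argument. Starting from a smoothed sum $\sum_n f(n)\,w(n/x)$ for a fixed smooth compactly supported $w$, I would write it as $\frac{1}{2\pi i}\int F(s)\tilde w(s)x^s\,ds$, shift onto the curved boundary $\sigma(t)=1/k-c_0/\bigl(k\log(|t|+2)\bigr)$ without crossing poles (the integrand is holomorphic throughout $1/k-c_0/(k\log(|t|+2))<\Re(s)\le 1$), and estimate there. On that contour $|x^s|=x^{1/k}\exp\!\bigl(-c_0\log x/(k\log(|t|+2))\bigr)$, while the rapid decay of $\tilde w$ absorbs the polynomial growth of $F$; optimising the interplay between the width of the zero-free region and the error incurred in passing back to the sharp cutoff is what produces the stated saving $x^{1/k}\exp(-c(\log x)^{1/4})$.

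The main obstacle is precisely this last optimisation. The saving comes from a zero-free region of width only $\asymp 1/\log(|t|+2)$, whereas the de-smoothing error is controlled by the absolute convergence of $F$ on $\Re(s)>1$ and is therefore of size comparable to $x$; balancing these two uniformly in $t$, with the correct convexity and mean-value input for $L(s,\chi)$ and the $\ll\log$ bound for $1/\zeta(ks)$ (respectively $1/L(ks,\chi)$ in the odd case), is the technical heart and is what pins the exponent at $(\log x)^{1/4}$ rather than the $(\log x)^{1/2}$ one might naively hope for from a bare Möbius-sum estimate. The odd $k$ case is handled identically once $\zeta(ks)$ is replaced by $L(ks,\chi)$, the analytic input (nonvanishing on $\Re=1$ and the zero-free region) being the same.
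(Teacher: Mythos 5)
Your construction of $f$ (a quadratic character $\chi$ completed to a completely multiplicative $g$ with $g(p)=1$ for $p\mid q$, then restricted to the $k$-free integers) is a legitimate instance of the paper's class, and the factorization $F(s)=G(s)/\zeta(ks)$ (resp.\ $G(s)/G(ks)$) is correct. The gap is the analytic step: a direct Mellin--Perron contour shift for $F(s)$ cannot produce a bound of order $x^{1/k}$, and the ``optimisation'' you defer to the end is not a technicality but an obstruction. Since $|f|=\mu_k^2$ has positive density, replacing the sharp cutoff by a smooth weight of relative width $\delta$ costs an error $\asymp \delta x$; to make this $\ll x^{1/k}$ you need $\delta\le x^{1/k-1}$, so $\tilde w(s)$ only begins to decay at height $|t|\asymp x^{1-1/k}$. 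At that height the zero-free-region saving $x^{-c_0/(k\log(|t|+2))}$ is a bounded constant, so no saving over the trivial bound survives; worse, on the contour $\Re(s)\approx 1/k\le 1/2$ the factor $L(s,\chi)$ grows polynomially in $|t|$ (convexity gives $|t|^{(1-1/k)/2+\epsilon}$), so the integral up to height $x^{1-1/k}$ already contributes a positive power of $x$ beyond $x^{1/k}$. This is the same tension that prevents one from deducing $M_\mu(x)\ll x^{1/2+\epsilon}$ from the classical zero-free region alone.

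The paper circumvents this by never integrating $F$ directly. It writes $f=h\ast g$ with $h=f\ast g^{-1}$ supported on $k$-th powers, so that $M_h(x)=\sum_{n\le x^{1/k}}\mu(n)g(n)$ (or $\sum_{n\le x^{1/k}}\mu(n)$ for $k$ even); the zero-free-region input is applied only to the classical sums $M_\mu$ and $M_{\mu\chi}$, where the relevant Dirichlet series $1/\zeta$ and $1/L(\cdot,\chi)$ are controlled near $\Re(s)=1$ and the target saving is merely $\exp(-c\sqrt{\log y})$, giving $M_h(x)\ll x^{1/k}\exp(-c\sqrt{\log x}/\sqrt{k})$. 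The Dirichlet hyperbola method with $V=\exp(\epsilon\sqrt{\log x})$ then combines this with Lemma \ref{lemma 2}, which for the general class of $g$ allowed in Theorem \ref{theorem 1} gives only $M_g(y)\ll y^{1/k}\exp(-\delta\sqrt{\log y})$; evaluating that bound at $y\approx V$ is what produces the exponent $\sqrt{\log V}\asymp(\log x)^{1/4}$ --- it is not a smoothing versus zero-free-region trade-off. If you wish to keep your specific $g$, the correct route is still the convolution/hyperbola argument of Section 3, not a contour integral for $F$.
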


Thus, we can go beyond $x^{1/k}$-cancellation when we consider $k$-free integers, and the exponent $1/k$ is critical in our results -- If we assume the Generalized Riemann Hypothesis (GRH \footnote{We recall that $\zeta$ stands for the classical Riemann zeta function and for a Dirichlet character $\chi$, $L(s,\chi)$ stands for the classical $L$-function $L(s,\chi)=\sum_{n=1}^\infty \frac{\chi(n)}{n^s}$. The Riemann Hypothesis for $\zeta$ states that all zeros $\zeta(s_0)=0$ with real part $Re(s_0)>0$  have real part equals $1/2$. The Generalized Riemann Hypothesis is the same statement for $L(s,\chi)$.}), then we can go beyond the exponent $1/k$:

\begin{theorem}\label{teorema riemann} Assume the Generalized Riemann Hypothesis. Then there exists a multiplicative function $f:\NN\to\{-1,0,1\}$ supported on the $k$-free integers such that at primes $p$, $f(p)=\pm 1$, and such that  
$$\sum_{n\leq x}f(n)\ll x^{1/(k+\frac{1}{2})+\epsilon},$$
for any $\epsilon>0$.
\end{theorem}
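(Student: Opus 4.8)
The plan is to take a function $f$ constructed exactly as for Theorem~\ref{teorema 1} and to re-examine the same Dirichlet series $F(s)=\sum_{n=1}^\infty f(n)n^{-s}$ under the stronger hypothesis. Because $f$ is multiplicative, supported on the $k$-free integers and equal to $\pm1$ at primes, its Euler product is a finite polynomial in $p^{-s}$ at each prime,
\[
F(s)=\prod_p\Big(1+f(p)p^{-s}+f(p^2)p^{-2s}+\cdots+f(p^{k-1})p^{-(k-1)s}\Big),
\]
and the construction selects the prime-power values $f(p^j)$, $2\le j\le k-1$, by means of auxiliary real Dirichlet characters so that, after taking logarithms, every ``diagonal'' power sum $\sum_p c_j\,p^{-js}$ with $c_j\notin\mathbb{Z}$ is absorbed into a translated $L$-function $L(js,\chi_j)$ for $1\le j\le k-1$ (in particular the $j=1$ mean value vanishes, so there is no pole at $s=1$), while the terminal scale $j=k$ --- forced upon us since $f(p^k)=0$ --- produces a square-root type factor at $s=1/k$. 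First I would make this precise, writing
\[
F(s)=\zeta(ks)^{-1/2}\prod_{j=1}^{k-1}L(js,\chi_j)^{b_j}\,H(s),
\]
with suitable exponents $b_j\in\tfrac12\mathbb{Z}$ and $H(s)$ an absolutely convergent Euler product in a half-plane strictly to the left of $s=1/k$.

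The role of GRH is to convert this factorization into analytic continuation together with polynomial-growth estimates on vertical lines. Under GRH each $L(js,\chi_j)$ and $\zeta(ks)$ is zero-free for $\mathrm{Re}(js)>1/2$, so $F$ continues analytically into the half-plane $\mathrm{Re}(s)>\tfrac{1}{2k}$ apart from branch cuts emanating from the critical lines $\mathrm{Re}(s)=\tfrac1{2j}$ of the factors carrying a half-integral exponent; crucially the branch point at $s=1/k$ coming from $\zeta(ks)^{-1/2}$ is of square-root type, so $F$ stays bounded there and contributes \emph{no} main term, which is exactly what permits a bound that is $o(x^{1/k})$. The functional equation together with the Phragm\'en--Lindel\"of principle (and the Lindel\"of bound, which GRH supplies) then gives, on a line $\mathrm{Re}(s)=\alpha$ with $\tfrac{1}{2k}<\alpha<\tfrac1k$, an estimate of the shape $|F(\alpha+it)|\ll_{\alpha,\epsilon}(1+|t|)^{g(\alpha)+\epsilon}$, where $g(\alpha)$ is assembled from the convexity exponents $\max\{0,\tfrac12-j\alpha\}$ of the individual factors.

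With these inputs I would apply a truncated Perron formula, shift the contour from $\mathrm{Re}(s)=1+\epsilon$ to $\mathrm{Re}(s)=\alpha$, place the branch cut of $\zeta(ks)^{-1/2}$ to the right of $s=1/k$ so that it does not obstruct the shift, and bound the vertical integral by $x^\alpha T^{g(\alpha)+\epsilon}$, the horizontal segments and the detour around $s=1/k$ being negligible. Balancing this against the Perron truncation error and optimizing the truncation height $T$ together with the abscissa $\alpha$ is what isolates the exponent $1/(k+\tfrac12)$; heuristically the half-integer $k+\tfrac12$ records both the terminal scale $k$ of the $k$-free support and the square-root ($1/2$) nature of the critical factor made accessible by GRH. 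An arbitrary $\epsilon$ then absorbs the Lindel\"of and Perron losses, yielding $\sum_{n\le x}f(n)\ll x^{1/(k+1/2)+\epsilon}$.

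The main obstacle is twofold. First, the combinatorial heart of the construction: one must choose $f(p^j)\in\{-1,0,1\}$ (keeping $f(p)=\pm1$ and the $k$-free support) so that \emph{every} singularity of $F$ strictly to the right of the target line is cancelled and the remainder $H(s)$ genuinely converges far enough to the left, and checking that the $\chi_j$ and $b_j$ can be chosen consistently becomes delicate as $k$ grows. Second, the growth-versus-truncation optimization must be carried out with the correct exponent $g(\alpha)$: controlling the growth of the scaled $L$-functions to the \emph{left} of their critical lines, where convexity contributes positive powers of $|t|$, is precisely the step where the Lindel\"of bound furnished by GRH is indispensable, and it is the balance of these powers --- rather than the mere analyticity of $F$ --- that pins the exponent down to $1/(k+\tfrac12)$ instead of a weaker value.
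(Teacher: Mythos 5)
Your overall plan --- factor the Dirichlet series $F(s)$ into $L$-functions, continue it past $\mathrm{Re}(s)=1/k$ under GRH, and extract the bound from Perron's formula by a contour shift --- diverges from the paper at two points, and both are genuine gaps. First, the factorization you posit is not the one the construction produces. For $f=\mu_k^2 g$ with $g$ completely multiplicative and agreeing with a real character $\chi$ outside finitely many primes, the Euler factor at $p$ is the truncated geometric series $1+\chi(p)p^{-s}+\cdots+\chi(p)^{k-1}p^{-(k-1)s}=(1-\chi(p)^kp^{-ks})/(1-\chi(p)p^{-s})$, so $F(s)$ equals $L(s,\chi)/\zeta(ks)$ ($k$ even) or $L(s,\chi)/L(ks,\chi)$ ($k$ odd) times a harmless finite Euler product: all exponents are integers, there is no $\zeta(ks)^{-1/2}$, no auxiliary characters $\chi_j$ at intermediate scales $1<j<k$, and no branch cut. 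In particular $F$ is analytic (indeed it vanishes) at $s=1/k$; the relevant obstruction is the line of poles at $\mathrm{Re}(s)=1/(2k)$ coming from zeros of $\zeta(ks)$ (resp.\ $L(ks,\chi)$), not a square-root singularity at $1/k$.

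Second, and more seriously, the Perron-plus-convexity optimization does not ``isolate'' the exponent $1/(k+\frac12)$; it gives something strictly weaker for every $k\ge 3$. On $\mathrm{Re}(s)=\alpha$ with $1/(2k)<\alpha<1/2$ the functional equation forces $|L(\alpha+it,\chi)|\asymp |t|^{1/2-\alpha}|L(1-\alpha-it,\chi)|$, so the vertical integral is of size $x^\alpha T^{1/2-\alpha+\epsilon}$ and this cannot be improved even on average; balancing against the truncation error $x^{1+\epsilon}/T$ yields $x^{1/(3-2\alpha)+\epsilon}$, minimized as $\alpha\to 1/(2k)$ at $x^{k/(3k-1)+\epsilon}$. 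Since $k(2k+1)-2(3k-1)=(2k-1)(k-2)$, one has $k/(3k-1)>2/(2k+1)=1/(k+\frac12)$ for all $k>2$ (the two coincide only at $k=2$). The paper reaches the stronger exponent precisely by \emph{not} estimating $L(s,\chi)$ on vertical lines: it writes $f=h\ast g$, where $M_g(y)\ll y^{\alpha}$ for every $\alpha>0$ (Lemma 2.4 of \cite{aymone_beyond_onehalf}, ultimately the boundedness of character sums) and $M_h(y)\ll y^{1/(2k)+\epsilon}$ under GRH (because $h$ is supported on $k$-th powers, so $M_h(y)$ reduces to $\sum_{n\le y^{1/k}}\mu(n)$ or $\sum_{n\le y^{1/k}}\mu(n)\chi(n)$), and then runs the Dirichlet hyperbola method with $U=x^{2k/(2k+1)}$ and $V=x^{1/(2k+1)}$. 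The boundedness of character sums is exactly the information that the convexity bound $|t|^{1/2-\alpha}$ throws away, and it is what makes $1/(k+\frac12)$ reachable; to repair your argument you would need either to pass to this convolution framework or to find another way to beat convexity for $L(s,\chi)$ well to the left of its critical line.
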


Our examples in Theorems \ref{teorema 1} and \ref{teorema riemann} are constructed in light of the extreme examples in the Erd\"os discrepancy theory for multiplicative functions. 

The original formulation of the Erd\"os discrepancy problem (EDP) is whether there exists an arithmetic function $f:\NN\to\{-1,1\}$ such that its partial sums over any homegenous arithmetic progression are uniformly bounded by some constant $C>0$. As shown by Tao in \cite{taodiscrepancy}, no such function exists, and moreover, we might say that the essence of the EDP is whether a completely multiplicative function $f:\NN\to\{z\in\CC: |z|=1\}$ has bounded or unbounded partial sums, see \cite{taodiscrepancy}.

A consequence of the solution of the EDP is that a completely multiplicative function $f:\NN\to\{-1,1\}$ has unbounded partial sums. It is important to mention that if we allow completely multiplicative functions vanish over a finite subset of primes, then we might get bounded partial sums, for instance, any real and non-principal Dirichlet character $\chi$. These Dirichlet characters are what we call the extreme examples in the Erd\"os discrepancy theory for multiplicative functions.

Following Granville and Soundararajan \cite{granvillepretentious}, define the ``distance'' up to $x$ between two multiplicative functions $|f|,|g|\leq 1$ as
\begin{equation*}
\mathbb{D}(f,g;x):=\left(\sum_{p\leq x}\frac{1-Re(f(p)\overline{g(p)})}{p}\right)^{1/2},
\end{equation*}
where in the sum above $p$ denotes a generic prime. Moreover, let $\mathbb{D}(f,g):=\mathbb{D}(f,g;\infty)$, and we say that $f$ pretends to be $g$, or that $f$ is $g$-pretentious if $\mathbb{D}(f,g)<\infty$. 

In the resolution of the EDP (see Remark 3.1 of \cite{taodiscrepancy} and Proposition 2.1 of \cite{aymone_discrepancy}),  it has been shown that a multiplicative function $f:\NN\to[-1,1]$ with bounded partial sums must be $\chi$-pretentious, for some real Dirichlet character $\chi:\NN\to\{-1,0,1\}$, provided that the logarithmic average $\frac{1}{\log x}\sum_{\sqrt{x}\leq n\leq x}\frac{|f(n)|^2}{n}\gg 1$. Thus, if we seek for multiplicative functions supported on the $k$-free integers with small partial sums, then we must look firstly to those that are $\chi$-pretentious for some real and non-principal Dirichlet character $\chi$. In our particular case, to construct examples for theorems \ref{teorema 1} and \ref{teorema riemann} we assume much more than $\chi$-pretentiousness. Indeed, we deduce the Theorem \ref{teorema 1} from the following result:
\begin{theorem}\label{theorem 1} Let $\mu_k^2(n)\in\{0,1\}$ be the indicator that $n$ is $k$-free. Let $g:\NN\to\{-1,1\}$ be a completely multiplicative function that is $\chi$-pretentious for some real and non-principal Dirichlet character $\chi$, and further assume that
\begin{equation}\label{condicao 1}
\sum_{p\leq x}|1-g(p)\chi(p)|\ll \frac{x^{1/k}}{\exp(c\sqrt{\log x})}.
\end{equation}
Let $f=\mu_k^2g$ be a multiplicative function supported on the $k$-free integers. Then there exists a constant $\lambda>0$ such that
\begin{equation}\label{condicao 2}
\sum_{n\leq x}f(n)\ll \frac{x^{1/k}}{\exp(\lambda(\log x )^{1/4})}.
\end{equation}
\end{theorem}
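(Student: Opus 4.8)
The plan is to exploit the Euler product factorization of the Dirichlet series of $f$ in order to reduce the estimate to two essentially independent ingredients: an elementary bound for the partial sums of $g$ that inherits the sparsity saving of hypothesis \eqref{condicao 1}, and a classical Mertens/prime-number-theorem estimate that supplies cancellation precisely in the range where the first ingredient is unavailable.

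First I record the relevant multiplicative identity. Since $f=\mu_k^2 g$ with $g$ completely multiplicative, the local factor of $\sum_n f(n)n^{-s}$ at $p$ is $\sum_{j=0}^{k-1}g(p)^jp^{-js}=\frac{1-g(p)^kp^{-ks}}{1-g(p)p^{-s}}$, so that $f=g\ast b_k$, where $b_k$ is supported on $k$-th powers with $b_k(d^k)=\mu(d)g(d)^k$. Writing $M_g(y):=\sum_{m\le y}g(m)$, this yields the exact identity
\[ \sum_{n\le x}f(n)=\sum_{d\le x^{1/k}}\mu(d)g(d)^k\,M_g\!\left(\frac{x}{d^k}\right). \]
The heart of the argument is then the lemma $M_g(y)\ll y^{1/k}\exp(-c\sqrt{\log y})$. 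To prove it, factor $g=\chi\ast r$, so $M_g(y)=\sum_{e\le y}r(e)S_\chi(y/e)$ with $S_\chi(t)=\sum_{a\le t}\chi(a)$; as $\chi$ is non-principal, $|S_\chi(t)|\le q$ (with $q$ the modulus), and hence $|M_g(y)|\ll\sum_{e\le y}|r(e)|$. Now $r$ is supported on integers all of whose prime factors lie in $P:=\{p:g(p)\ne\chi(p)\}$ (together with the finitely many $p\mid q$), with $|r(p^a)|\le 2$. Hypothesis \eqref{condicao 1} is exactly the statement that $\#\{p\in P:p\le t\}\ll t^{1/k}\exp(-c\sqrt{\log t})$; in particular $\sum_{p\in P}p^{-1/k}<\infty$. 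This convergence forces the $P$-smooth integers to be dominated by their largest prime factor, so that the $\exp(-c\sqrt{\log})$ saving enjoyed by $\#\{p\in P:p\le t\}$ is inherited by $\sum_{e\le y}|r(e)|$ after summing the rapidly convergent contribution of the remaining (smaller) prime factors. This step is entirely elementary and uses no zero-free region.

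Next I split the $d$-sum at $D_0=x^{1/k}\exp(-V)$, with a parameter $V\asymp\sqrt{\log x}$. On the range $d\le D_0$ the argument $x/d^k$ is large, so the lemma applies; estimating $\sum_{d\le D_0}(x/d^k)^{1/k}\exp(-c\sqrt{\log(x/d^k)})$ by the substitution $v=\log(x/d^k)$ produces a contribution $\ll x^{1/k}\exp(-\lambda(\log x)^{1/4})$, the exponent $1/4$ arising exactly from the choice $V\asymp\sqrt{\log x}$ acting on the $\sqrt{\log}$-saving. On the complementary range $d>D_0$ the lemma is useless, since there $x/d^k$ is bounded; here I invert the order of summation and write the contribution as $\sum_{m<\exp(kV)}g(m)\sum_{D_0<d\le(x/m)^{1/k}}\mu(d)g(d)^k$, seeking cancellation in the inner sum.

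The main obstacle is precisely this inner M\"obius-type sum $\sum_{d\le D}\mu(d)g(d)^k$, whose Dirichlet series is $A(s)=\prod_p(1-g(p)^kp^{-s})$. When $k$ is even, $g(p)^k=1$ and $A(s)=1/\zeta(s)$, so the sum is $\sum_{d\le D}\mu(d)$; when $k$ is odd, $g(p)^k=g(p)$ and $A(s)=1/(L(s,\chi)R(s))$ with $R(s)=\prod_p\frac{1-\chi(p)p^{-s}}{1-g(p)p^{-s}}$ absolutely convergent and non-vanishing for $\mathrm{Re}(s)\ge 1/k$, hence in particular near $\mathrm{Re}(s)=1$. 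In both cases the classical zero-free region for $\zeta$, respectively $L(\cdot,\chi)$, yields $\sum_{d\le D}\mu(d)g(d)^k\ll D\exp(-c'\sqrt{\log D})$ by a standard truncated-Perron contour shift; this is the only point at which analytic information about $\zeta$ or $L$ is needed. Since $D\le x^{1/k}$ throughout this range, the inner sum is $\ll x^{1/k}\exp(-c''\sqrt{\log x})$, and summing over the $\ll\exp(kV)$ values of $m$ costs a factor $\exp(kV)=\exp(O(\sqrt{\log x}))$; taking the implied constant in $V$ small enough keeps this range $\ll x^{1/k}\exp(-c'''\sqrt{\log x})$, comfortably dominated by the first range. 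Balancing the two ranges through the single parameter $V\asymp\sqrt{\log x}$ is the crux of the proof, and it is exactly this balance that converts the $\sqrt{\log x}$ saving of \eqref{condicao 1} into the $(\log x)^{1/4}$ saving of \eqref{condicao 2}.
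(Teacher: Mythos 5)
Your proposal is correct and follows essentially the same route as the paper: the same factorization $f=h\ast g$ with $h$ supported on $k$-th powers and $h(d^k)=\mu(d)g(d)^k$, the same key lemma $M_g(y)\ll y^{1/k}\exp(-c\sqrt{\log y})$ obtained from the sparsity hypothesis \eqref{condicao 1} via $g=\chi\ast r$ (the paper's Lemmas \ref{lemma 1} and \ref{lemma 2}), and the same hyperbola-type split at $\exp(\Theta(\sqrt{\log x}))$, which is exactly where the $(\log x)^{1/4}$ exponent is produced in the paper as well. The only real divergence is a sub-step: for odd $k$ you bound $\sum_{d\le D}\mu(d)g(d)$ by a direct Perron contour shift on $1/(L(s,\chi)R(s))$ with $R$ an absolutely convergent Euler factor, whereas the paper's Lemma \ref{Lemma 3} reaches the same bound by a second convolution/hyperbola reduction to the known estimate for $\sum_{n\le x}\mu(n)\chi(n)$ --- both rest on the same zero-free region and are equally standard.
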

The condition \eqref{condicao 1} above essentialy says that the completely multiplicative function $g$ is obtained by modifying a Dirichlet character $\chi$ in a quantity $\ll x^{1/k}\exp(-c\sqrt{\log x})$ of primes $p$ below $x$, and then the example for Theorem \ref{teorema 1} is obtained by setting $f=\mu_k^2g$. In contrast, to get an example as in Theorem \ref{teorema riemann}, we do not have such flexibility and we assume that our completely multiplicative function $g$ coincides with a Dirichlet character in all, but a finite subset of primes $p$, and then we set $f=\mu_k^2g$ just as in Theorem \ref{teorema 1}. Thus, these are our examples of multiplicative functions supported on the $k$-free integers with the smallest partial sums (under GRH), and are what we call real and non-principal Dirichlet characters restricted to the $k$-free integers.

We observe that if $f$ is within our class of examples of Theorem \ref{teorema riemann}, then the Dirichlet series of $f$, $F(s):=\sum_{n=1}^\infty f(n)n^{-s}$, is essentially the Dirichlet series of $\mu_k^2\chi$ which is $L(s,\chi)/\zeta(ks)$ if $k$ is even, and it is $L(s,\chi)/L(ks,\chi)$ if $k$ is odd. Thus, under GRH, the Dirichlet series of $f$ has analytic continuation to the half plane $Re(s)>1/2k$. This suggests that the partial sums up to $x$ of $f$ could have cancellation $\ll x^{1/2k+\epsilon}$, which would be much better than the result of Theorem \ref{teorema riemann}. However, it is important to note that the existence of analytic continuation is not enough to ensure this cancellation.  

Finally, if $f$ is in the class of examples of Theorem \ref{teorema riemann}, then $f$ can not have partial sums up to $x$ that are $\ll x^{1/2k-\epsilon}$, for any $\epsilon>0$. This is because 
 $\zeta(s)$ and $L(s,\chi)$ have an infinite number of zeros with $Re(s)=1/2$. Thus,  the Dirichlet series of $f$ will have an infinite number of poles at $Re(s)=1/2k$, unless that, in the case that $k$ is even, every zero of $\zeta(ks)$ corresponds to a zero of $L(s,\chi)$ in the line $Re(s)=1/2k$, or in the case that $k$ is odd, every zero of $L(ks,\chi)$ corresponds to a zero of $L(s,\chi)$ in the line $Re(s)=1/2k$. But this cannot happen due to zero density estimates, see the Grand Density Theorem (Theorem 10.4, page 260 of \cite{kowalski_livro}).

\section{Preliminaries}
In this section we state classical results that we will need in the course of the proofs of the main results. 

\subsection{Partial sums of $\mu$ and of $\mu\chi$}
As stated in the introduction, for some constant $c>0$, the partial sums of the M\"obius function are (unconditionally)
\begin{equation}\label{equacao somas parciais da mobius incondicional}
\sum_{n\leq x}\mu(n)\ll\frac{x}{\exp(c\sqrt{\log x})}.
\end{equation}

This can be obtained by adapting the proof of the Prime Number Theorem as in \cite{montgomery_livro}, chapter 6. Indeed, this follows by combining an effective Perron's formula (Corollary 5.3 of \cite{montgomery_livro}) with the classical zero free region for the Riemann zeta function, namely those points $s=\sigma+it$ such that $\sigma\geq 1-\frac{c}{\log(|t|+4)}$, for some constant $c>0$, and the estimate $1/ \zeta(s)\ll \log(|t|+4)$, valid for $s$ in the classical zero free region with $|t|\geq 1$.

If $\chi$ is a real and non-principal Dirichlet character, then we also have that for some constant $c>0$, the function $\mu\chi$ has (unconditionally) partial sums
\begin{equation}\label{equacao somas parciais da mobius chi incondicional}
\sum_{n\leq x}\mu(n)\chi(n)\ll\frac{x}{\exp(c\sqrt{\log x})}.
\end{equation}

The proof of this result follows the same line of reasoning of the proof of \eqref{equacao somas parciais da mobius incondicional} with some extra care. This extra care is related to exceptional zeros of $L$ functions. Indeed, the Dirichlet series of $\mu\chi$ is $1/L(s,\chi)$, where $L(s,\chi)$ is the Dirichlet $L$ function associated to $\chi$. The $L$ function $L(s,\chi)$ has a zero free region of the same type of Riemann zeta function with at most one possible exception, a real zero $\beta<1$ inside this region, see \cite{montgomery_livro} Theorem 11.3. Further, by Theorem 11.4 of \cite{montgomery_livro}, whether there exists an exceptional zero or not, we have that for $s=\sigma+it$ in this zero free region with sufficiently large $|t|$, $1/L(s,\chi)\ll \log(|t|+4)$, and thus the proof of \eqref{equacao somas parciais da mobius chi incondicional} is completed with an effective Perron's formula. 

Now if we assume the Riemann Hypothesis (RH), then for any $\epsilon>0$ the partial sums
\begin{equation}\label{equacao somas parciais da mobius condicional}
\sum_{n\leq x}\mu(n)\ll x^{1/2+\epsilon}.
\end{equation}

A proof of this can be found at \cite{tit} Theorem 14.25 (C). In a few words, under RH we can deduce that for $s=\sigma+it$ with $\sigma\geq \sigma_0>1/2$ and $|t|$ sufficently large, $\log \zeta(s)\ll_{\sigma_0,\epsilon} (\log(|t|+4))^{2(1-\sigma)+\epsilon}$, for any $\epsilon>0$, where $\ll_{\sigma_0, \epsilon}$ means that the implicit constant depends only in $\sigma_0$ and $\epsilon$, see Theorem 14.2 of \cite{tit}. Hence, under RH, $1/\zeta(s)$ is analytic in $Re(s)>1/2$ and $1/\zeta(s)\ll_{\sigma_0,\epsilon}|t|^\epsilon$, for any $\epsilon>0$. Then, the proof of \eqref{equacao somas parciais da mobius condicional} is completed with an effective Perron's formula.

Finally, under GRH, for a real and non-principal Dirichlet character $\chi$, for any $\epsilon>0$ we have that
\begin{equation}\label{equacao somas parciais da mobius chi condicional}
\sum_{n\leq x}\mu(n)\chi(n)\ll x^{1/2+\epsilon},
\end{equation}
and the proof follows the lines of the proof of \eqref{equacao somas parciais da mobius condicional}.
\subsection{The Dirichlet Hyperbola Method}\label{subsection dirichlet hyperbola} This method allow us to evaluate the partial sums of the Dirichlet convolution between two arithmetic functions $h$ and $g$. We recall that the Dirichlet convolution is defined as $h\ast g(n)=\sum_{d|n}h(d)g(n/d)$, where the notation $d|n$ means that $d$ divides $n$. Denote $M_f(x):=\sum_{n\leq x}f(n)$. The Dirichlet hyperbola method states: If $UV=x$ with $U,V\geq1$, then
\begin{equation}\label{equacao dirichlet hyperbola}
\sum_{n\leq x}(h\ast g)(n)=\sum_{n\leq U}h(n)M_g\bigg{(}\frac{x}{n}\bigg{)}+\sum_{n\leq V}g(n)M_h\bigg{(}\frac{x}{n}\bigg{)}-M_g(V)M_h(U).
\end{equation}

\section{Proof of the main results}

\subsection{Some words about the proof in the $k-free$ case in comparison with the squarefree case \cite{aymone_beyond_onehalf}} The proof of the main results in the $k$-free case follows closely the strategy in the squarefree case \cite{aymone_beyond_onehalf}. Briefly speaking, we can take a quadratic character $\chi$ (and modify it as in Theorem \ref{theorem 1}) and then we consider $f=\mu_k^2\chi$, where $\mu_k^2(n)$ is the indicator that $n$ is $k$-free. The Dirichlet series of $f$ is $L(s,\chi)/\zeta(ks)$ if $k$ is even, and it is $L(s,\chi)/L(ks,\chi)$ if $k$ is odd. Then we proceed with the Dirichlet Hyperbola method to upper bound the partial sums of $f$. The key difference, is that when $k$ is odd, we need to input estimates for $\sum_{n\leq x}\mu(n)\chi(n)$, and when $k$ is even, we need estimates for the partial sums of $\mu$. In our unconditional result, we need the estimates \eqref{equacao somas parciais da mobius incondicional} and \eqref{equacao somas parciais da mobius chi incondicional}, and in our conditional result, we need \eqref{equacao somas parciais da mobius condicional} and \eqref{equacao somas parciais da mobius chi condicional}. This explains why we need the Generalized Riemann hypothesis (for Dirichlet L-functions). Moreover, plugging the best conditional estimates for the partial sums of $\mu$ such as Soundararajan's upper bound mentioned in the introduction, perhaps we could substitute the term $x^\epsilon$ by a lower order term in the $k$-free and even case. But we prefer to leave it as it is.

\subsection{Proof of Theorem \ref{theorem 1}}  The next two Lemmas are just an adaptation of Lemmas 2.1 and 2.2 of \cite{aymone_beyond_onehalf} to our setting. In a few words, we show that if we modify a real and non-principal Dirichlet character $\chi$ in at most $\ll x^{1/k}\exp(-c\sqrt{\log x})$ primes $p\leq x$, then the resulting completely multiplicative function $g:\NN\to\{-1,1\}$ has partial sums $\sum_{n\leq x}g(n)\ll x^{1/k}\exp(-\delta\sqrt{\log x})$ for some $\delta>0$. The only difference with Lemmas 2.1 and 2.2 of \cite{aymone_beyond_onehalf} is the exponent $1/k$. Since the proofs are short we include it here for completeness.
\begin{lemma}\label{lemma 1} Let $h:\NN\to[0,\infty)$ be a multiplicative function such that:\\
(i) $h(p)\leq2$ and $h(p^r)\leq h(p)$, for all primes $p$ and all powers $r\geq 2$;\label{(i)}\\
(ii) For some constant $c>0$, $\sum_{p\leq x}h(p)\ll \frac{x^{1/k}}{\exp(c\sqrt{\log x})}$.\\
Then there exists a $\delta>0$ such that 
$$\sum_{n\leq x}h(n)\ll \frac{x^{1/k}}{\exp(\delta\sqrt{\log x})}.$$
\end{lemma}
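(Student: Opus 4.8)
The plan is to use Rankin's trick, i.e.\ to bound the truncated sum by a Dirichlet series evaluated slightly to the left of the abscissa $1/k$. First I would fix a parameter $0<A<c$ and set $\sigma=\frac1k-\frac{A}{\sqrt{\log x}}$, so that $\sigma\to 1/k^{-}$ as $x\to\infty$. Since $h\ge 0$ and $(x/n)^{\sigma}\ge 1$ for $n\le x$, one has
\[
\sum_{n\le x}h(n)\le x^{\sigma}\sum_{n\le x}\frac{h(n)}{n^{\sigma}}\le x^{\sigma}\prod_{p\le x}\left(1+\sum_{r\ge1}\frac{h(p^{r})}{p^{r\sigma}}\right),
\]
the last step using that every $n\le x$ is composed of primes $p\le x$ together with the nonnegativity of $h$. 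Invoking hypothesis (i) in the form $h(p^{r})\le h(p)$ to sum the geometric series, and then $1+t\le e^{t}$, this is at most $x^{\sigma}\exp\!\big(\sum_{p\le x}h(p)\tfrac{p^{-\sigma}}{1-p^{-\sigma}}\big)$.

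The crux is to show that the exponent $E(x):=\sum_{p\le x}h(p)\tfrac{p^{-\sigma}}{1-p^{-\sigma}}$ is bounded uniformly in $x$. Because $\sigma$ stays bounded away from $0$ for large $x$, the factor $(1-p^{-\sigma})^{-1}$ is $\ll_{k}1$, so it suffices to control $\sum_{p\le x}h(p)p^{-\sigma}$. Writing $S(t)=\sum_{p\le t}h(p)$ and integrating by parts, this equals $S(x)x^{-\sigma}+\sigma\int_{2}^{x}S(t)t^{-\sigma-1}\,dt$. I would then insert hypothesis (ii), $S(t)\ll t^{1/k}e^{-c\sqrt{\log t}}$. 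The boundary term becomes $\ll x^{1/k-\sigma}e^{-c\sqrt{\log x}}=e^{(A-c)\sqrt{\log x}}\le 1$, while the integral becomes $\ll\int_{2}^{x}t^{\eta-1}e^{-c\sqrt{\log t}}\,dt$ with $\eta=\frac1k-\sigma=\frac{A}{\sqrt{\log x}}$.

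The main obstacle — and the only place where the precise choice of $\sigma$ matters — is to show this last integral is $O(1)$ rather than merely $O(\log x)$. Substituting $u=\log t$ turns it into $\int_{\log 2}^{\log x}e^{\eta u-c\sqrt u}\,du$, and the key observation is that for $u\le\log x$ one has $\eta u=\tfrac{A}{\sqrt{\log x}}\,u\le A\sqrt u$, so the exponent is at most $(A-c)\sqrt u=-(c-A)\sqrt u$. Hence the integral is dominated by $\int_{0}^{\infty}e^{-(c-A)\sqrt u}\,du=\tfrac{2}{(c-A)^{2}}=O(1)$, using $A<c$. Feeding this back gives $E(x)\ll_{k}1$ and therefore $\sum_{n\le x}h(n)\ll_{k}x^{\sigma}=x^{1/k}e^{-A\sqrt{\log x}}$, which is the assertion with $\delta=A$.

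I expect the restriction of the Euler product to primes $p\le x$ (forced by the truncation $n\le x$) to be essential, since for any \emph{fixed} positive $\eta$ the analogous integral $\int_{2}^{\infty}t^{\eta-1}e^{-c\sqrt{\log t}}\,dt$ diverges; it is precisely the interplay between the cutoff $u\le\log x$ and the calibrated choice $\eta=A/\sqrt{\log x}$ that keeps the exponent sum bounded and converts the savings $x^{-\eta}$ in the prefactor into the claimed factor $e^{-\delta\sqrt{\log x}}$.
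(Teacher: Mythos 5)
Your proof is correct, and it takes a genuinely different route from the paper's. The paper fixes a small $\delta>0$ and reduces the claim to the convergence of the fixed series $\sum_n h(n)\exp(\delta\sqrt{\log n})\,n^{-1/k}$: the weight $\exp(\delta\sqrt{\log n})$ is distributed over prime powers via the subadditivity $\sqrt{\log n}\le\sum_{p^r\|n}\sqrt{\log p^r}$, the resulting multiplicative majorant is controlled by an Euler product at the fixed abscissa $s=1/k$ (hypothesis (i) handling the powers $r\ge 2$, hypothesis (ii) plus partial summation handling $r=1$), and the conclusion is then extracted by Kronecker's lemma/partial summation. You instead use Rankin's trick at the $x$-dependent abscissa $\sigma=1/k-A/\sqrt{\log x}$, truncate the Euler product at $p\le x$, and show the exponent sum is $O_k(1)$ by exactly the inequality $\eta u\le A\sqrt{u}$ for $u\le\log x$ that you isolate as the crux; this is the correct and essential point, since (as you note) the corresponding integral with a fixed $\eta>0$ diverges. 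The ingredients (Euler product, (i) for higher prime powers, partial summation with (ii) for the prime sum) are the same, but the mechanism for extracting the saving $\exp(-\delta\sqrt{\log x})$ is different. Your version is arguably cleaner and quantitatively sharper: it yields the bound for every $\delta=A<c$, whereas the paper's argument forces $\delta<c/2$ and additionally $\exp(\delta\sqrt{\log p})<p^{1/k}$ for all primes $p$, which pins $\delta$ below roughly $\sqrt{\log 2}/k$. For the lemma as stated (existence of some $\delta>0$) both suffice. One minor point worth making explicit in a final write-up: hypothesis (ii) is an asymptotic bound, so the range $t\in[2,t_0]$ in your partial-summation integral should be handled by the trivial bound $S(t)=O(1)$ there, and the inequality $\sum_{n\le x}h(n)\le x^{\sigma}\sum_{n\le x}h(n)n^{-\sigma}$ needs $\sigma\ge 0$, which holds once $x$ is large; neither affects the argument.
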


\begin{proof} By partial summation, or by Kroenecker's Lemma (see \cite{shiryaev} page 390), we can obtain the stated conclusion if the following series converges for some $\delta>0$:
\begin{equation*}\sum_{n=1}^\infty\frac{h(n)\exp(\delta\sqrt{\log n})}{n^{1/k}}.
\end{equation*}

Notation: By $p^r\| n$ we mean that $r$ is the largest power of $p$ that divides $n$. Since $\sqrt{\log n}= \sqrt{\sum_{p^r\|n} \log p^r } \leq \sum_{p^r\| n} \sqrt{ \log p^r}$, we have that
\begin{equation*}\sum_{n\leq x}\frac{h(n)\exp(\delta\sqrt{\log n})}{n^{1/k}}\leq\sum_{n\leq x}\frac{\tilde{h}(n)}{n^{1/k}},
\end{equation*}
where $\tilde{h}$ is the multiplicative function such that $\tilde{h}(p^r)=\exp(\delta\sqrt{\log p^r})h(p^r)$, for all primes $p$ and all powers $r\geq 1$. We will show that the series $\sum_{n\leq x}\frac{\tilde{h}(n)}{n^{1/k}}$ converges. For that, by the Euler product formula (see Theorem 1.3, page 188 of \cite{tenenbaumlivro}), we only need to show that $\sum_{p\in\mathcal{P}}\sum_{r=1}^\infty\frac{\tilde{h}(p^r)}{p^{r/k}}$ converges, where $\mathcal{P}$ stands for the set of primes.

Let $0<\delta<c/2$ be small such that $\frac{\exp(\delta \sqrt{\log p})}{p^{1/k}}<1$ for all primes $p\in\mathcal{P}$. By the hypothesis \textit{(i)}
\begin{align*}
    \sum_{r=2}^\infty\frac{\tilde{h}(p^r)}{p^{r/k}}&=\sum_{r=2}^\infty\frac{h(p^r)\exp(\delta\sqrt{\log{p^r}})}{p^{r/k}}\leq h(p)\sum_{r=0}^\infty\frac{\exp(\delta\sqrt{r+2}\sqrt{\log{p}})}{p^\frac{r+2}{k}}\\
    &\leq h(p)\sum_{r=0}^\infty\frac{\exp(\delta (r+2)\sqrt{\log{p}})}{p^\frac{r+2}{k}}
    = h(p)\frac{\exp(2\delta \sqrt{\log{p}})}{p^{2/k}}\sum_{r=0}^\infty\frac{\exp(r\delta\sqrt{\log{p}})}{p^{r/k}}\\
    &=h(p)\frac{\exp(2\delta \sqrt{\log p})}{p^{2/k}}\frac{1}{1-\frac{\exp(\delta \sqrt{\log p})}{p^{1/k}}}\ll h(p)\frac{\exp(2\delta\sqrt{\log{p}})}{p^{1/k}(p^{1/k}-\exp(\delta\sqrt{\log{p}}))}\\
    &\ll_{\delta} \frac{h(p)\exp(2\delta \sqrt{\log p})}{p^{1/k}}.
\end{align*}

Define $T(x)=\sum_{p\leq x}h(p)$ if $x\geq1$. By expressing a sum by a Riemann-Stieltjes integral and then using the integration by parts formula:
\begin{align*}
    \sum_{p\leq x} \frac{h(p)\exp(2\delta\sqrt{\log p})}{p^{1/k}}&=\int_{1}^x \frac{\exp(2\delta\sqrt{\log t})}{t^{1/k}} dT(t)\\
    &\ll T(x)\frac{\exp(2\delta\sqrt{\log x})}{x^{1/k}}+\int_{1}^x T(t)\frac{\exp(2\delta\sqrt{\log t})}{t^{1+\frac{1}{k}}}dt\\
    &\ll \frac{1}{\exp((c-2\delta)\sqrt{\log x})}+\int_{1}^x \frac{1}{t\exp((c-2\delta)\sqrt{\log t})}dt\\
    &\ll 1,
\end{align*}
where in the penultimate inequality we used the hypothesis \textit{(ii)}.

Thus, 
\begin{equation*}
    \sum_{p\leq x}\sum_{r=1}^\infty\frac{\tilde{h}(p^r)}{p^{r/k}}\ll_\delta \sum_{p\leq x}\frac{h(p)\exp(2\delta\sqrt{\log{p}})}{p^{1/k}}\ll 1.
\end{equation*}
\end{proof}

\begin{lemma}\label{lemma 2} Let $g:\mathbb{N}\to\{-1,1\}$ be a completely multiplicative function. Assume that for some real and non-principal Dirichlet character $\chi$,  $g$ satisfies \eqref{condicao 1}:  
$$ \sum_{p\leq x}|1-g(p)\chi(p)|\ll \frac{x^{1/k}}{\exp(c\sqrt{\log x})}.$$
Then, for some $\delta>0$, 
$$\sum_{n\leq x}g(n)\ll \frac{x^{1/k}}{\exp(\delta\sqrt{\log x})}.$$
\end{lemma}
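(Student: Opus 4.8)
The plan is to reduce Lemma \ref{lemma 2} to Lemma \ref{lemma 1} by writing $g$ as a Dirichlet convolution with a manageable ``correction'' factor, and then invoke the known cancellation for $\sum_{n\leq x}\chi(n)$. Since $g$ and $\chi$ are both completely multiplicative with values in $\{-1,1\}$ (away from the primes dividing the modulus, where $\chi$ vanishes), the natural idea is to factor $g = \chi \ast r$, where $r$ is the multiplicative function defined by the formal Dirichlet series identity $\sum_n g(n)n^{-s} = L(s,\chi)\sum_n r(n)n^{-s}$, so that $r$ measures exactly where $g$ and $\chi$ disagree. First I would compute $r$ on prime powers: at a prime $p$ one finds $r(p) = g(p)-\chi(p)$, which is $0$ when $g(p)=\chi(p)$ and has absolute value $2$ precisely at the primes where they differ; at higher prime powers $r(p^m)$ is a bounded expression in $g(p)$ and $\chi(p)$. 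Crucially $|r(p)| = |g(p)-\chi(p)| = |1-g(p)\chi(p)|$ (using $\chi(p)^2=1$ on the support), so the hypothesis \eqref{condicao 1} translates directly into a bound on $\sum_{p\leq x}|r(p)|$ of the required shape $\ll x^{1/k}\exp(-c\sqrt{\log x})$.

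Next I would apply Lemma \ref{lemma 1} to $h = |r|$. I would need to check the two hypotheses: condition (ii) is exactly the translation of \eqref{condicao 1} just described, and condition (i) requires $|r(p)|\leq 2$ and $|r(p^r)|\leq |r(p)|$ for $r\geq 2$. The bound $|r(p)|\leq 2$ is immediate since $g(p),\chi(p)\in\{-1,0,1\}$. The relation $|r(p^r)|\leq|r(p)|$ is the point that needs a short verification from the explicit formula for $r$ on prime powers; in particular one must check that at primes $p$ where $g(p)=\chi(p)$ (so $r(p)=0$) the higher values $r(p^m)$ also vanish, so that $r$ is supported, at each prime, only where $g$ and $\chi$ already differ. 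Granting this, Lemma \ref{lemma 1} yields
\begin{equation*}
\sum_{n\leq x}|r(n)| \ll \frac{x^{1/k}}{\exp(\delta'\sqrt{\log x})}
\end{equation*}
for some $\delta'>0$.

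Finally I would run the convolution $g=\chi\ast r$ through partial summation (or the trivial splitting), writing
\begin{equation*}
\sum_{n\leq x}g(n) = \sum_{d\leq x} r(d)\sum_{m\leq x/d}\chi(m) = \sum_{d\leq x} r(d)\, M_\chi\!\left(\frac{x}{d}\right).
\end{equation*}
Since $\chi$ is non-principal, its partial sums $M_\chi(y)=\sum_{m\leq y}\chi(m)$ are bounded by a constant depending only on the modulus. Hence each inner sum is $O(1)$, and the whole expression is $\ll \sum_{d\leq x}|r(d)|$, which by the previous step is $\ll x^{1/k}\exp(-\delta'\sqrt{\log x})$, giving the claim with $\delta=\delta'$.

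The step I expect to be the main obstacle is the verification of hypothesis (i) for $h=|r|$, specifically the monotonicity $|r(p^r)|\leq|r(p)|$ together with the vanishing of $r$ at prime powers over primes where $g$ and $\chi$ agree. Getting the explicit local factor $\sum_{m\geq 0} r(p^m)p^{-ms} = \big(\sum_{m\geq 0} g(p^m)p^{-ms}\big)\big(1-\chi(p)p^{-s}\big) = (1-g(p)p^{-s})^{-1}(1-\chi(p)p^{-s})$ right, and reading off the coefficients $r(p^m)$ as a telescoping combination of $g(p)^m$ and $g(p)^{m-1}\chi(p)$, is where the care lies; once the local computation confirms $r(p^m)=g(p)^{m-1}(g(p)-\chi(p))$, both parts of (i) follow at once, since $|g(p)|\leq 1$ forces $|r(p^m)|\leq|g(p)-\chi(p)|=|r(p)|$, and $r(p^m)=0$ whenever $r(p)=0$.
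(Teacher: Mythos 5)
Your proposal is correct and follows essentially the same route as the paper: you factor $g=\chi\ast r$ with $r=g\ast\chi^{-1}=g\ast\mu\chi$, compute $r(p^m)=g(p)^{m-1}(g(p)-\chi(p))$ so that $|r(p^m)|=|1-g(p)\chi(p)|$, apply Lemma \ref{lemma 1} to $|r|$, and conclude via the bounded partial sums of the non-principal character $\chi$. This is precisely the paper's argument (there the correction factor is called $h$), so nothing further is needed.
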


\begin{proof} Let $h=g\ast \chi^{-1}$, where $\chi^{-1}$ is the inverse of $\chi$ with respect to the Dirichlet convolution. Since $\chi$ is completely multiplicative, we have that $\chi^{-1}=\mu \chi$ (Theorem 2.17 of \cite{apostol}). Thus, $\chi^{-1}$ is multiplicative and has support on the squarefree integers. Therefore, for each prime $p$ and any power $r$:
\begin{align*}
|h(p^r)|&=|g\ast\chi^{-1}(p^r)|=\bigg|\sum_{d\mid p^r}g(d)\chi^{-1}\bigg(\frac{p^r}{d}\bigg)\bigg|=|g(p^r)+g(p^{r-1})\chi^{-1}(p)|\\
&=|g(p^r)||1+g(p)^{-1}\chi^{-1}(p)|\\
&=|1-g(p)\chi(p)|.
\end{align*}
Hence, $|h|$ satisfies the hypothesis of Lemma \ref{lemma 1}. Since $g=h\ast \chi$, we have that $\sum_{n\leq x}g(n)=\sum_{n\leq x}h(n) \sum_{m\leq x/n}\chi(m)$, and as $\chi$ has bounded partial sums, we have that $\sum_{n\leq x}g(n)\ll\sum_{n\leq x}|h(n)|$. \end{proof}

Before we state our next Lemma, we will introduce some notation. Given an arithmetic function $f$, we denote its partial sums up to $x$ by
$$M_f(x):=\sum_{n\leq x}f(n).$$
\begin{lemma}\label{Lemma 3}
Let $g:\NN\to\{-1,1\}$ be multiplicative and let $\chi$ be a real and non-principal Dirichlet character. Assume that for some $\delta>0$
\begin{equation*}
    \sum_{p\leq x}|1-g(p)\chi(p)|\ll \frac{x^{1/k}}{\exp(\delta\sqrt{\log x})},
\end{equation*}
where $k\geq 2$. Then, there exists a constant $c>0$ such that
\begin{equation*}
M_{\mu g}(x) \ll \frac{x}{\exp(c\sqrt{\log x})}.
\end{equation*}
\end{lemma}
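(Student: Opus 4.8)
The plan is to factor $\mu g$ as a Dirichlet convolution of $\mu\chi$—whose partial sums are already controlled unconditionally by \eqref{equacao somas parciais da mobius chi incondicional}—with an error function that is small on average, and then to run the Dirichlet hyperbola method. Since $\chi$ is completely multiplicative, its Dirichlet inverse is $\chi^{-1}=\mu\chi$ (Theorem 2.17 of \cite{apostol}), so $\mu\chi$ and $\chi$ are mutually inverse under convolution. Define $h:=\mu g\ast\chi$; then
$$
\mu\chi\ast h=\mu\chi\ast\mu g\ast\chi=(\mu\chi\ast\chi)\ast\mu g=\mu g,
$$
so that $\mu g=\mu\chi\ast h$.

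First I would identify $h$ explicitly at prime powers. Because $\mu(p^j)=0$ for $j\geq2$ and $(\mu g)(p)=-g(p)$, for every prime $p$ and $r\geq1$ one computes $h(p^r)=\chi(p)^{r-1}\big(\chi(p)-g(p)\big)$. For the finitely many primes dividing the modulus of $\chi$ this gives $|h(p)|=1$ and $|h(p^r)|=0$ for $r\geq2$, while for $\chi(p)\neq0$, since $\chi(p),g(p)\in\{-1,1\}$, one gets $|h(p^r)|=|\chi(p)-g(p)|=|1-g(p)\chi(p)|$ for all $r\geq1$. In either case $|h|$ is multiplicative with $|h(p)|\leq2$ and $|h(p^r)|\leq|h(p)|$, and moreover $\sum_{p\leq x}|h(p)|=\sum_{p\leq x}|1-g(p)\chi(p)|+O(1)\ll x^{1/k}\exp(-\delta\sqrt{\log x})$ by hypothesis. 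Hence $|h|$ satisfies the hypotheses of Lemma \ref{lemma 1}, which yields a $\delta'>0$ with $M_{|h|}(x)\ll x^{1/k}\exp(-\delta'\sqrt{\log x})$.

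Next I would apply the Dirichlet hyperbola method \eqref{equacao dirichlet hyperbola} to $\mu g=h\ast\mu\chi$ with the symmetric split $U=V=\sqrt{x}$, giving
$$
M_{\mu g}(x)=\sum_{n\leq\sqrt x}h(n)M_{\mu\chi}\Big(\tfrac{x}{n}\Big)+\sum_{n\leq\sqrt x}(\mu\chi)(n)M_h\Big(\tfrac{x}{n}\Big)-M_{\mu\chi}(\sqrt x)M_h(\sqrt x).
$$
The first sum is the main one: bounding $|M_{\mu\chi}(x/n)|\ll(x/n)\exp(-c\sqrt{\log(x/n)})\leq(x/n)\exp(-c'\sqrt{\log x})$ for $n\leq\sqrt x$ and factoring out $x$, it is $\ll x\exp(-c'\sqrt{\log x})\sum_{n\leq\sqrt x}|h(n)|/n$. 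Here the decisive point is that $\sum_{n}|h(n)|/n$ converges, because partial summation against $M_{|h|}(t)\ll t^{1/k}\exp(-\delta'\sqrt{\log t})$ produces an integral $\int^{\infty} t^{1/k-2}\,dt$ that converges for $k\geq2$; thus the first sum is $\ll x\exp(-c'\sqrt{\log x})$, exactly the target order. The remaining two terms are smaller: using $|M_h|\leq M_{|h|}$ and $\sum_{n\leq\sqrt x}n^{-1/k}\ll x^{(1-1/k)/2}$, each is $\ll x^{1/2+1/(2k)}\exp(-c''\sqrt{\log x})$, which for $k\geq2$ is $\ll x^{3/4}$ and hence negligible against $x\exp(-c\sqrt{\log x})$.

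The only real obstacle is organizational rather than deep: one must check that the convergence of $\sum_{n}|h(n)|/n$ (guaranteed precisely by the exponent $1/k<1$) survives the hyperbola bookkeeping, so that the main term retains the full savings $\exp(-c\sqrt{\log x})$ coming from \eqref{equacao somas parciais da mobius chi incondicional} rather than being diluted to the weaker $x^{1/k}$-scale savings carried by $h$. Taking $c$ to be the smallest of the constants produced along the way then completes the argument.
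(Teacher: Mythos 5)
Your proposal is correct and follows essentially the same route as the paper: the same decomposition $\mu g=(\mu g\ast\chi)\ast\mu\chi$, the same computation $h(p^r)=\chi(p)^{r-1}(\chi(p)-g(p))$ feeding into Lemma \ref{lemma 1}, and the same hyperbola argument driven by the unconditional bound \eqref{equacao somas parciais da mobius chi incondicional} on $M_{\mu\chi}$. The only (harmless) difference is your symmetric split $U=V=\sqrt{x}$, which makes the $B$ and $C$ terms power-saving and slightly simplifies the bookkeeping compared with the paper's choice $V=\exp(\epsilon\sqrt{\log x})$, at the cost of a constant factor in the exponent of the main term $A$.
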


\begin{proof} Recall that $\chi=(\mu\chi)^{-1}$, where the exponent $-1$ means that we are taking the inverse with respect to the Dirichlet convolution. Define $h:=\mu g\ast (\mu \chi)^{-1}$. Let $1/L(s,\chi)$, $Z(s)$, $H(s)$ be the Dirichlet series associated to $\mu \chi$, $\mu g$ and $h$, respectively. Recall the Euler product formula for $L(s,\chi)$:
\begin{equation*}
L(s,\chi)=\sum_{n=1}^\infty \frac{\chi(n)}{n^s}=\prod_{p\in \mathcal{P}}\bigg(1+\frac{\chi(p)}{p^s}+\frac{\chi(p)^2}{p^{2s}}+\cdots\bigg).
\end{equation*}
For $\mu g$ we have
\begin{equation*}
Z(s)=\sum_{n=1}^\infty \frac{\mu(n)g(n)}{n^s}=\prod_{p\in\mathcal{P}}\bigg(1-\frac{g(p)}{p^s}\bigg).
\end{equation*}

By the rules of Dirichlet convolution, we can write $H(s)=Z(s)L(s,\chi)$, and hence
\begin{align*}
    H(s)&=Z(s)L(s,\chi)=\prod_{p\in \mathcal{P}}\bigg(1+\frac{\chi(p)}{p^s}+\frac{\chi(p)^2}{p^{2s}}+\cdots\bigg)\bigg(1-\frac{g(p)}{p^s}\bigg)\\
    &=\prod_{p\in \mathcal{P}}\left( 1+\frac{\chi(p)-g(p)}{p^{s}}+\cdots+\frac{\chi(p)^{n-1}(\chi(p)-g(p))}{p^{ns}}+\cdots \right). 
\end{align*}
Since $h$ is multiplicative, we have that $h(p^n)=\chi(p)^{n-1}(\chi(p)-g(p))$. Thus $|h(p)| \leq 2$ and for all powers $r$, $|h(p^r)| \leq |h(p)|$. Moreover, since $|\chi(p) - g(p)| \leq |1-\chi(p) g(p)|$, we have by hypothesis that
\begin{align*}
    \sum_{p \leq x}|h(p)| &= \sum_{p \leq x}|\chi(p) - g(p)| \leq \sum_{p \leq x}|1-\chi(p)g(p)| \ll \frac{x^{\frac{1}{k}}}{\exp \left( \delta \sqrt{\log x} \right)}.
\end{align*}
Thus, by Lemma \ref{lemma 1}, there exists a new $\delta>0$ such that
\begin{equation*}
   | M_h(x)|\leq M_{|h|}(x) \ll \frac{x^{\frac{1}{k}}}{\exp \left( \delta \sqrt{\log x} \right)} \ll x^{1/k}.
\end{equation*}
In particular, by partial summation, the series $\sum_{n=1}^\infty\frac{|h(n)|}{n}$ converges.

Now we will use the Dirichlet Hyperbola method for $\mu g=h\ast \mu\chi$, see subsection \ref{subsection dirichlet hyperbola}. For all $U\geq 1$ and $V\geq 1$ such that $UV=x$, we have that
\begin{equation*}\label{metodo hiperbole}
    M_{\mu g}(x)=\sum_{n\leq V}h(n)M_{\mu \chi} \bigg( \frac{x}{n} \bigg) +\sum_{n\leq U}\mu(n)\chi(n)M_h\bigg{(}\frac{x}{n}\bigg{)}-M_{\mu \chi}(U)M_h(V):=A+B-C.
\end{equation*}
By \eqref{equacao somas parciais da mobius chi incondicional}, we have that for some constant $a>0$, $M_{\mu\chi}(x)\ll \frac{x}{\exp(a \sqrt{\log{x}})}$. Now choose $V=\exp(\epsilon\sqrt{\log{x}})$, where $0<\epsilon<a$. Further, let $c>0$ be a parameter $c<\min\left(\epsilon \frac{k-1}{k},a\right)$.

\noindent\textit{Estimate for $A$.}
\begin{align*}
|A|&\leq \sum_{n\leq V} | h(n)M_{\mu \chi} \bigg{(} \frac{x}{n} \bigg{)} | \ll 
\sum_{n\leq V} |h(n)| \frac{x}{n \exp \left( a\sqrt{\log\frac{x}{n}} \right)}\\
 &\ll \frac{x}{\exp \left( a\sqrt{\log\frac{x}{V}}\right) }  \sum_{n \leq V} \frac{|h(n)|}{n} \ll \frac{x}{\exp \left( a \sqrt{\log\frac{x}{V}}\right)} \\
   &\ll \frac{x}{\exp\left( a\sqrt{\log x-\epsilon\sqrt{\log x}}\right)} \ll \frac{x}{\exp \left( a\sqrt{\log x} \left( 1 - \frac{\epsilon}{\sqrt{\log x}}\right)^\frac{1}{2} \right)}\\
  &\ll \frac{x}{\exp \left( c \sqrt{\log x}\right) }.
\end{align*}
\noindent\textit{Estimate for $B$.}
\begin{align*}
    \left|B\right|&\leq \sum_{n\leq U}\left|\mu(n)\chi(n)M_h\bigg(\frac{x}{n}\bigg)\right|\ll\sum_{n\leq U}\bigg(\frac{x}{n}\bigg)^{1/k} \ll x^{1/k}\sum_{n\leq U}\frac{1}{n^{1/k}}\ll x^{\frac{1}{k}} U^{1-\frac{1}{k}}\\
    &\ll \frac{x}{\exp\left( \epsilon\frac{k-1}{k}\sqrt{\log{x}}\right)}\ll \frac{x}{\exp(c\sqrt{\log {x}})}.
\end{align*}
\noindent\textit{Estimate for $C$.}
\begin{align*}
    C=M_{\mu\chi}(U)M_h(V)&\ll \frac{U V^{1/k} }{\exp(a\sqrt{\log{U}})}\ll \frac{x}{V} \frac{V^\frac{1}{k}}{\exp(a\sqrt{\log{x}-\log{V})}}\\
    &\ll \frac{x}{V^{1-\frac{1}{k}}\exp\left( a\sqrt{\log{x}-\epsilon\sqrt{\log{x}}}\right)}\\
    &\ll \frac{x}{\exp\left( a\sqrt{\log{x}-\epsilon\sqrt{\log{x}}}+\epsilon\frac{k-1}{k}\sqrt{\log{x}}\right)}\\
    &\ll \frac{x}{\exp\left(\sqrt{\log{x}} \left( a\left( 1-\frac{\epsilon}{\sqrt{\log{x}}}\right)^{1/2}+\epsilon\frac{k-1}{k}\right)\right)}\\
    &\ll \frac{x}{\exp(c\sqrt{\log{x}})}.
\end{align*}
By combining these estimates for $A$, $B$, and $C$ we conclude the proof. \end{proof}
\begin{proof}[Proof of Theorem \ref{theorem 1}] Let $h:=f\ast g^{-1}$, where $g^{-1}$ is the inverse of $g$ with respect to the Dirichlet convolution. Let $F$, $G$ and $H$ be the Dirichlet series associated to $f$, $g$ and $h$ respectively. Since $g$ is completely multiplicative, by the Euler product formula we have that
\begin{equation*}
    G(s)=\prod_{p\in \mathcal{P}}\bigg(1-\frac{g(p)}{p^s}\bigg)^{-1}.
\end{equation*}
For $f=\mu_{k}^{2}g$, we have that
\begin{equation*}
    F(s)=\prod_{p\in\mathcal{P}}\bigg(1+\frac{f(p)}{p^s}+\frac{f(p^2)}{p^{2s}}+\dots\bigg)=\prod_{p\in\mathcal{P}}\bigg(1+\frac{g(p)}{p^s}+\frac{g(p)^2}{p^{2s}}+\dots+\frac{g(p)^{k-1}}{p^{(k-1)s}}\bigg).
\end{equation*}
Since $h=f\ast g^{-1}$, we can write $H(s)=F(s)/G(s)$, and hence
\begin{align*}
    H(s)&=\frac{F(s)}{G(s)}=\prod_{p\in\mathcal{P}}\bigg(1+\frac{g(p)}{p^s}+\frac{g(p)^2}{p^{2s}}+\dots+\frac{g(p)^{k-1}}{p^{(k-1)s}}\bigg)\bigg(1-\frac{g(p)}{p^s}\bigg)\\
    &=\prod_{p\in\mathcal{P}}\bigg(1-\frac{g(p)^k}{p^{ks}}\bigg).
\end{align*}
Since $h$ is multiplicative, we obtain that  $h(p^k)=-g(p)^k$ and $h(p^r)=0$, for all  $r\geq 1$ and $r\neq k$. Thus, $h(n)=0$ unless $n=m^k$ for some integer $m\geq 1$. Let $\ind_{\NN}(n^{1/k})$ be the indicator that $n^{1/k}$ is an integer. Now observe that if $k$ is odd, then $h(p^k)=-g(p)^k=-g(p)$, since $g(p)=\pm1$, and hence $h(n)=\ind_{\NN}(n^{1/k})\mu(n^{1/k})g(n^{1/k})$.

Thus, in the case that $k$ is odd, by Lemma \ref{Lemma 3}, for some constant $c>0$ we have that 
\begin{equation*}\label{equation partial sums of h with odd k}
M_h(x)=\sum_{n\leq x}h(n)=
\sum_{n\leq x}\ind_{\NN}(n^\frac{1}{k})\mu(n^\frac{1}{k})g(n^\frac{1}{k})\\
=\sum_{n\leq x^\frac{1}{k}}\mu(n)g(n)\\
\ll\frac{x^\frac{1}{k}}{\exp(c\sqrt{\log{x}^{1/k}})}.
\end{equation*}

If $k$ is even, then $h(p^k)=-1$, and hence $h(n)=\ind_{\NN}(n^{1/k})\mu(n^{1/k})$. By \eqref{equacao somas parciais da mobius incondicional}, we have for some (new) constant $c>0$, $M_\mu(x)\ll \frac{x}{\exp(c\sqrt{\log{x}})}$. Therefore,
\begin{equation*}\label{equation partial sums of h with even k}
    M_h(x)=\sum_{n\leq x}\ind_{\NN}(n^{\frac{1}{k}})\mu(n^{\frac{1}{k}})=\sum_{n\leq x^{\frac{1}{k}}}\mu(n)\ll\frac{x^{1/k}}{\exp(c\sqrt{\log{x^{1/k}}})}.
\end{equation*}
Thus, for any integer $k\geq 1$, for some constant $c>0$,
\begin{equation*}\label{equation partial sums of h}
    M_h(x)\ll\frac{x^{1/k}}{\exp(c\sqrt{\log{x^{1/k}}})}.\\
\end{equation*}
On the other hand, by Lemma \ref{lemma 2}, for some $\delta>0$, we have that
$$M_g(x)\ll \frac{x^{1/k}}{\exp(\delta\sqrt{\log x})}.$$
Now we use the Dirichlet hyperbola method (subsection \ref{subsection dirichlet hyperbola}): For all $U\geq 1$ and $V\geq 1$ such that $UV=x$, we have that
\begin{equation}\label{equation dirichlet hyperbola}
    M_f(x)=\sum_{n\leq U}h(n)M_g\bigg{(}\frac{x}{n}\bigg{)}+\sum_{n\leq V}g(n)M_h\bigg{(}\frac{x}{n}\bigg{)}-M_g(V)M_h(U):=A+B-C.
\end{equation}
We choose $V=\exp(\epsilon(\sqrt{\log x}))$, where $0<\epsilon<\frac{c}{\sqrt{k}}$ and $U=\frac{x}{V}$. Besides that, $\lambda>0$ is a parameter
$\lambda<\min(\delta\sqrt{\epsilon},\frac{c}{\sqrt{k}}-\frac{\epsilon(k-1)}{k})$.\\
 \textit{Estimate for $A$.}
\begin{align*}
    |A| &\leq \sum_{n\leq U}| \ind_{\NN}(n^\frac{1}{k})M_g(x/n)|=\sum_{n\leq U^\frac{1}{k}}| M_g(x/n^{k})|\\
    &\ll \sum_{n\leq U^\frac{1}{k}}\frac{x^\frac{1}{k}}{n}\frac{1}{\exp\bigg(\delta\sqrt{\log\frac{x}{n^k}}\bigg)}\ll \frac{x^\frac{1}{k}}{\exp\bigg(\delta\sqrt{\log\frac{x}{U}}\bigg)}\sum_{n\leq U^\frac{1}{k}}\frac{1}{n}\\
    &\ll \frac{x^\frac{1}{k}\log{U}}{\exp(\delta\sqrt{\log {V}})}\ll\frac{x^\frac{1}{k}(\log{x}-\log{V})}{\exp(\delta\sqrt{\epsilon}(\log{x})^\frac{1}{4})}\ll\frac{x^\frac{1}{k}\exp(\log{\log{x}})}{\exp(\delta\sqrt{\epsilon}(\log{x})^\frac{1}{4})}\\
    &\ll \frac{x^\frac{1}{k}}{\exp(\lambda(\log{x})^\frac{1}{4})},
\end{align*}
since $\lambda<\delta\sqrt{\epsilon}$.\\
\noindent  \textit{Estimate for $B$.} 
\begin{align*}
    |B|&= \sum_{n\leq V}|g(n)M_h(x/n)|\leq\sum_{n\leq V}|M_h(x/n)|\ll\sum_{n\leq V}\bigg(\frac{x}{n}\bigg)^\frac{1}{k}\frac{1}{\exp\bigg( c\sqrt{\log({\frac{x}{n})^\frac{1}{k}}}\bigg)}\\
    &\ll x^\frac{1}{k}\sum_{n\leq V}\frac{1}{n^\frac{1}{k}}\frac{1}{\exp\bigg(\frac{c}{\sqrt{k}}\sqrt{\log{\frac{x}{n}}}\bigg)}\ll \frac{x^\frac{1}{k}}{\exp\bigg(\frac{c}{\sqrt{k}}\sqrt{\log{\frac{x}{V}}}\bigg)}\sum_{n\leq V}\frac{1}{n^\frac{1}{k}}\\
    &\ll\frac{x^\frac{1}{k}V^\frac{k-1}{k}}{\exp\bigg( \frac{c}{\sqrt{k}}\sqrt{\log{x}-\log{V}}\bigg)} \ll \frac{x^\frac{1}{k}\exp\bigg(\epsilon\frac{k-1}{k}\sqrt{\log{x}}\bigg)}{\exp\bigg(\frac{c}{\sqrt{k}}\sqrt{\log{x}-\epsilon(\log{x})^\frac{1}{2}}\bigg)}\\
    &\ll \frac{x^\frac{1}{k}}{\exp\bigg(\frac{c}{\sqrt{k}}\sqrt{\log{x}-\epsilon(\log{x})^\frac{1}{2}}-\epsilon\frac{k-1}{k}\sqrt{\log{x}}\bigg)}\\
    &\ll \frac{x^\frac{1}{k}}{\exp\bigg((\log{x})^\frac{1}{2}\bigg(\frac{c}{\sqrt{k}}\bigg( 1-\frac{\epsilon}{ (\log{x})^{1/2}}\bigg)^\frac{1}{2}-\frac{\epsilon(k-1)}{k}\bigg)\bigg)} \ll \frac{x^\frac{1}{k}}{\exp\bigg(\lambda(\log{x})^\frac{1}{2}\bigg)}\\
    &\ll \frac{x^\frac{1}{k}}{\exp\big(\lambda(\log{x})^\frac{1}{4}\big)},
\end{align*}
since $0<\lambda<\frac{c}{\sqrt{k}}-\frac{\epsilon(k-1)}{k}$.

\noindent  \textit{Estimate for $C$.}
\begin{align*}
    C=M_g(V)M_h(U)&\ll \frac{V^\frac{1}{k}}{\exp(\delta\sqrt{\log{V}})}\frac{U^\frac{1}{k}}{\exp(c\sqrt{\log{U^{1/k}}})}\ll \frac{x^\frac{1}{k}}{\exp(\delta\sqrt{\log{V}})}\\
    &\ll \frac{x^\frac{1}{k}}{\exp(\delta\sqrt{\log{\exp(\epsilon\sqrt{\log{x}})}})}\ll \frac{x^\frac{1}{k}}{\exp(\delta\sqrt{\epsilon}(\log{x})^\frac{1}{4})}\\
    &\ll\frac{x^\frac{1}{k}}{\exp(\lambda(\log{x})^\frac{1}{4})}. 
\end{align*}
Combining these estimates for $A$, $B$ and $C$ we complete the proof. \end{proof}
\subsection{Proof of Theorem \ref{teorema riemann}}

\begin{proof}
Let $\chi$ be any real and non-principal Dirichlet character $\mbox{mod } q$ and $g:\NN\to\{-1,1\}$ be the completely multiplicative function such that:
\begin{align*}
g(n)&= \chi(n),\mbox{ if } \gcd(n,q)=1,\\
g(p)&=1, \mbox{ for each prime }p|q.
\end{align*}
Let $\omega(q)$ be number of distinct primes that divide $q$. Then, by Lemma 2.4 of \cite{aymone_beyond_onehalf} :
\begin{equation*}
\limsup_{x\to\infty} \frac{|M_g(x)|}{(\log x)^{\omega(q)}}\leq \frac{\max_{y\geq 1} |M_\chi(y)|}{\omega(q)!}\prod_{p|q}\frac{1}{\log p}.
\end{equation*}
In particular,  $M_g(x) \ll x^\alpha$, for each $\alpha>0$.

Let $h=f\ast g^{-1}$, where $g^{-1}$ is the Dirichlet inverse of $g$. As in the proof of theorems \ref{teorema 1} and \ref{theorem 1} above, if $k$ is odd, we have that  $h(n)=\ind_\NN(n^\frac{1}{k})\mu(n^\frac{1}{k})g(n^\frac{1}{k})$; If $k$ is even, then $h(n)=\ind_{\NN}(n^{\frac{1}{k}})\mu(n^{\frac{1}{k}})$.

In any case, by assuming GRH, we have \eqref{equacao somas parciais da mobius condicional} and \eqref{equacao somas parciais da mobius chi condicional}, and hence, for any $k\geq 2$,  $M_h(x) \ll x^{\frac{1}{2k}+\epsilon}$, for all $\epsilon>0$.

Now we use the Dirichlet Hyperbola method following the same notation of (\ref{equation dirichlet hyperbola}), and setting $U=x^{\frac{2k}{2k+1}}$ and $V=x^{\frac{1}{2k+1}}$. It is noteworthy that these choices for $U$ and $V$ are optimal in our method.

\noindent  \textit{Estimate for $A$.}
\begin{align*}
    |A|&\leq\sum_{n\leq U}\bigg|h(n)M_g\bigg(\frac{x}{n}\bigg)\bigg|=\sum_{n\leq U^\frac{1}{k}}\bigg|M_g\bigg(\frac{x}{n^k}\bigg)\bigg|\ll\sum_{n\leq U^\frac{1}{k}}\frac{x^\alpha}{n^{k\alpha}}\ll x^{\alpha}U^{\frac{1}{k}(1-k\alpha)}\\
    &\ll x^{\alpha}x^{\frac{2}{2k+1}(1-k\alpha)}\ll x^{\frac{2}{2k+1}+\alpha\left( 1-\frac{2k}{2k+1}\right)} \ll x^{\frac{2}{2k+1}+\frac{\alpha}{2k+1}}.
\end{align*}

\noindent  \textit{Estimate for $B$.}
\begin{align*}
    |B|&\leq\sum_{n\leq V}\bigg|g(n)M_h\bigg(\frac{x}{n}\bigg)\bigg|\ll x^{\frac{1}{2k}+\epsilon}\sum_{n\leq V}\frac{1}{n^{\frac{1}{2k}+\epsilon}}\ll x^{\frac{1}{2k}+\epsilon}V^{1-\frac{1}{2k}-\epsilon}\\
    &\ll x^{\frac{1}{2k}+\epsilon}x^{\frac{1}{2k+1}\left( 1-\frac{1}{2k}-\epsilon\right)}\ll x^{\frac{2}{2k+1}+\epsilon\left( 1-\frac{1}{2k+1}\right)}\ll x^{\frac{2}{2k+1}+\frac{\epsilon 2k}{2k+1}}.
\end{align*}

\noindent  \textit{Estimate for $C$.}
\begin{align*}
    C=M_g(V)M_h(U)\ll V^{\alpha}U^{\frac{1}{2k}+\epsilon}\ll x^{\frac{\alpha}{2k+1}}x^{\frac{1}{2k+1}+\frac{\epsilon 2k}{2k+1}}\ll x^{\frac{1}{2k+1}+\frac{\alpha}{2k+1}+\frac{\epsilon 2k}{2k+1}}.
\end{align*}
Since we have freedom to choose $\alpha$ and $\epsilon$ arbitrarily small, the proof is complete. \end{proof}

\section{A simulation}

In the figure below, we plot the partial sums of $\mu^2\chi_3$ up to $x=10^7$, where $\chi_3$ is the non-principal Dirichlet character with modulus $3$.

\begin{figure}[h]
\includegraphics[scale=0.5]{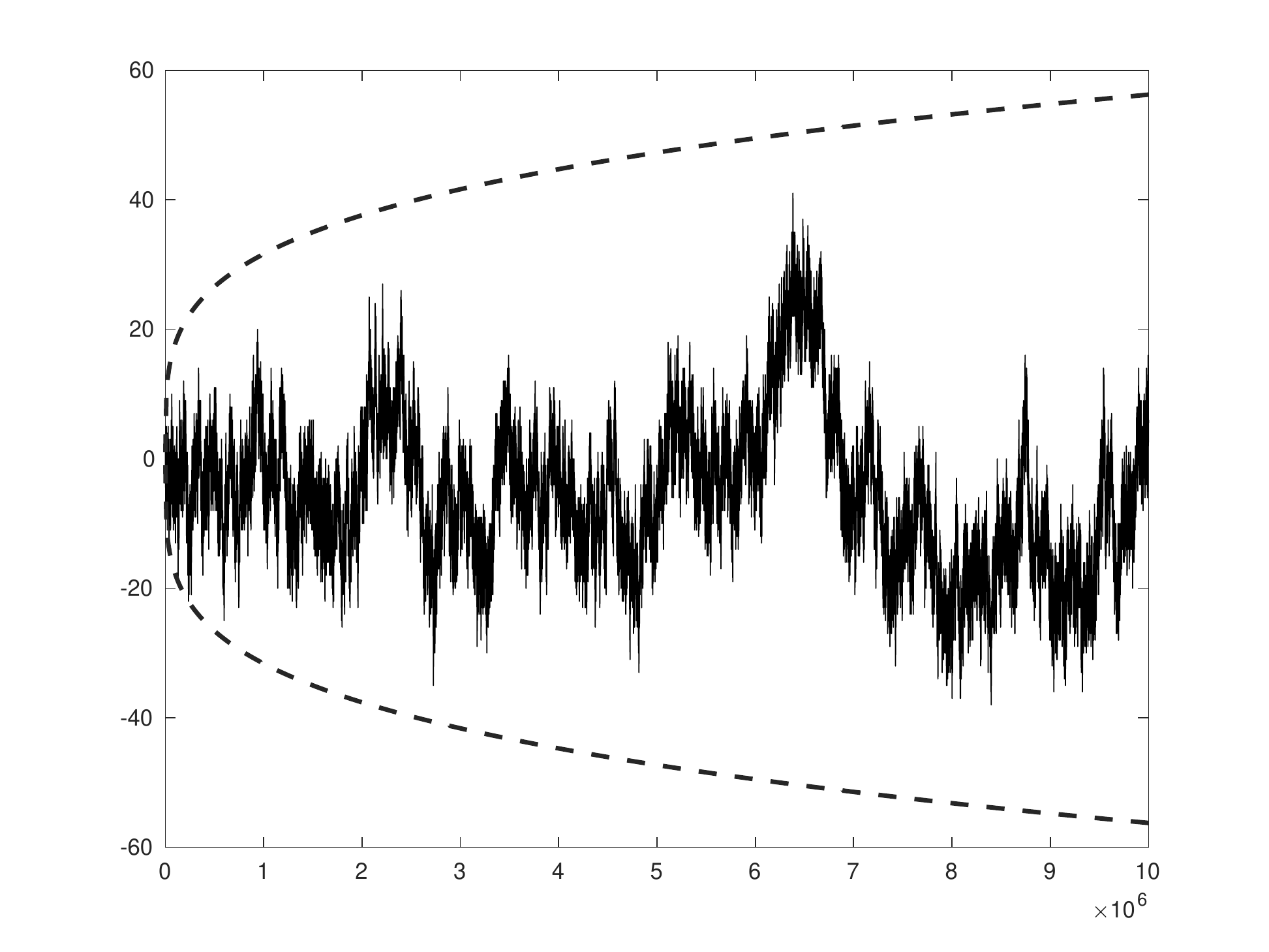} 
\caption{The dashed curves are given by $x\mapsto\pm x^{1/4}$, and the continuous line is given by $x\mapsto\sum_{n\leq x}\mu^2(n)\chi_3(n)$.}
\end{figure}

As we observed in the introduction, for a quadratic character $\chi$, $\sum_{n\leq x}\mu_k^2(n)\chi(n)$ is not $O(x^{1/2k-\epsilon})$, and the figure above is in agreement with this in the case $k=2$. Our conditional result says that the partial sums up to $x$ are $\ll x^{1/(k+1/2)+\epsilon}$ and as the figure suggests in the case $k=2$, we conjecture that actually is $\ll x^{1/2k+\epsilon}$, for all $\epsilon>0$.

\noindent\textbf{Acknowledgements.} We would like to thank the anonymous referee for a careful reading of the paper and for useful suggestions that improved the exposition. MA is supported by CNPq - grant Universal number 403037/2021-2; CB was financed in part by the scholarship from Coordenação de Aperfeiçoamento de Pessoal de Nível Superior - Brasil (CAPES) - Finance code 001; KM was financed by the Fapemig scholarship.

\noindent {\small{\sc
 Departamento de Matem\'atica, Universidade Federal de Minas Gerais (UFMG), Av. Ant\^onio Carlos, 6627, Belo Horizonte, Minas Gerais, Caixa Postal 702, CEP 31270-901, Brazil} \\
\textit{Email address:} \\
aymone.marco@gmail.com \\
 caiomafiabueno@gmail.com \\
  ktmedeiros36@gmail.com}

\end{document}